\documentclass[a4paper,12pt]{amsart}

\usepackage{amsmath, amssymb, euscript, amscd}
\usepackage{graphicx}
\usepackage{hyperref}

\newtheorem{assumption}{Assumption}

\newtheorem{theorem}{Theorem}
\newtheorem{proposition}{Proposition}

\title[On cohomological equations for Vershik automorphisms]{On cohomological equations for suspension flows over  \\Vershik automorphisms}

\author{Dmitry Zubov}\thanks{\textit{2000 Mathematics Subject Classification:} Primary: 37A20, Secondary: 37A05, 37B10, 37E05, 37E35, 37H15.}
\thanks{This work is supported by the Russian Science Foundation under grant
  14-50-00005 and performed in Steklov Mathematical Institute of Russian
  Academy of Sciences.}

\keywords{Vershik automorphisms,  cohomological equations, renormalization, finitely additive invariant measures, rate of convergence in the ergodic theorem, translation flows, interval exchange transformations}
\address{Department of Differential Equations, Faculty of Mechanics and Mathematics,  Moscow State University,  Leninskiye Gory,  GSP-1,  119991, Moscow}
\email{dmitry.zubov.93@gmail.com}

\begin{document}

\begin{abstract}
  In this paper we give sufficient conditions for existence of bounded solution of cohomological equation for suspension flows over automorphisms of Markov compacta, which were introduced by Ito~\cite{Ito'78} and Vershik \cite{Vershik'82}.  This result can be regarded as a symbolic analogue of results due to Forni \cite{Forni'97} and Marmi, Moussa and Yoccoz \cite{MMY'05} for translation flows and interval exchange transformations. 
\end{abstract}

\maketitle

\section{Introduction}
\subsection{Markov compacta.\\ }  
	\text{   }Consider a directed graph $\Gamma$ with $2m$ vertices arranged in two levels, the top and the bottom, with $m$ vertices at each one.  Each edge in the graph $\Gamma$ goes from the vertex of the top level to the one of the bottom level, and there may be multiple edges.  We shall assume that each vertex has an edge that either starts or ends in it.  Let us denote by $\mathfrak{G}$ the set of all such graphs.

	The graph $\Gamma$ can be uniquely defined by the incidence matrix $ A = A(\Gamma)$ whose elements are defined by the formula
$$
	A_{ij}(\Gamma):=\#\{\text{edges in $\Gamma$}: i \to j\}, \text{ }i,j = 1,...,m.      
$$ 

	Now consider a sequence $\{A_n, n \in \mathbb{Z}\}$ of $(m \times m)$ incidence matrices. It defines a graded graph $\Gamma_\infty = \cup_{n \in \mathbb{Z}} \Gamma_n$.  The space $X$ of all paths in $\Gamma_\infty$ is called a \textit{Markov compactum}.  Each point in $X$ is a path in $\Gamma_\infty$, that is a sequence of edges $\{x_n, n \in \mathbb{Z}\}$ such that the terminal vertex $F(x_n)$ of each edge $x_n$ coincides with the initial vertex $I(x_{n-1})$ of $x_{n-1}$. 

	Following notation of Bufetov ~\cite{Bufetov'09}, ~\cite{Bufetov'13}, for $x \in X, n \in \mathbb{Z}$ we introduce the sets
$$ \gamma_n^+(x) = \{ x \in X : x_t = x'_t, t \ge n\} ,\text{  }\gamma_n^-(x) = \{x \in X : x_t = x'_t, t \le n\},  $$
$$ \gamma_\infty^+(x) = \bigcup_{n \in \mathbb{Z}} \gamma_n^+(x), \text{  }\gamma_\infty^-(x) = \bigcup_{n \in \mathbb{Z}} \gamma_n^-(x).$$
	The sets $\gamma_\infty^+(x)$ are the leaves of the \textit{vertical} foliation $\EuScript{F}^+(X)$ and the sets $\gamma_\infty^-(x)$ are the leaves of the \textit{horizontal} foliation $\EuScript{F}^-(X)$. 
	Note also that the sets $\gamma_n^+(x), \gamma_n^-(x)$ form semi-rings $\mathfrak{C}^+(X)$ and $\mathfrak{C}^-(X)$ respectively.  
	
	Following  Vershik \cite{Vershik'82} and Ito \cite{Ito'78}, we shall define a partial ordering $\mathfrak{o}$, which is called a \textit{Vershik ordering}, on the Markov compactum $X$. 

	For each vertex $v$ on the top level of the graph $\Gamma_n \subset \Gamma_\infty$ let us fix some linear ordering on the set of all edges of $\Gamma_n$ starting at $v$.  Now we say that $x<x'$ for $x,x' \in X$, if there exists $n \in \mathbb{Z}$ such that $x_t = x'_t$ for all $t>n$ and $x_n < x'_n$ (note that the edges $x_n$ and $x_n$ are comparable since they start at the same vertex). One can see that $x,x' \in X$ are comparable with respect to $\mathfrak{o}$ if and only if they both belong to the same leaf of the foliation $\EuScript{F}^+$.

	Similarly, if we fix a linear ordering on the set of edges ending at each given vertex of $\Gamma_n$, this ordering will induce a partial ordering $\tilde{\mathfrak{o}}$ on $X$, and $x,x' \in X$ are comparable with respect to $\tilde{\mathfrak{o}}$ if and only if they both belong to the same leaf of the foliation $\EuScript{F}^-$. The ordering $\tilde{\mathfrak{o}}$ is called a \textit{reversed Vershik ordering}.

\subsection{Renormalization cocycle and invariant measure. \\}
	\text{ } Now consider the space $\Omega = \mathfrak{G}^{\mathbb{Z}}$ of bi-infinite sequences $\{\omega_n, n \in \mathbb{Z}\}$ of graphs.  For $\omega \in \Omega$ we denote $X(\omega)$ the corresponding Markov compactum. 
A shift $\sigma$ acts on the space $\Omega$ so that $(\sigma \omega)_n = \omega_{n+1}$, and this action admits a \textit{renormalization cocycle} $\mathbb{A}(n,x)$ which is defined for $n > 0$ by the product of incidence matrices:
$$
	\mathbb{A}(n,\omega) = A(\omega_n)... A(\omega_1).
$$
	If all matrices $A(\omega_n), n \in \mathbb{Z}$ are invertible, then we can define the cocycle $\mathbb{A}(n,x)$ for negative $n$ by the following formula:
$$
	\mathbb{A}(-n, \omega) = A^{-1}(\omega_{-n})...A^{-1}(\omega_0).
$$
We set $\mathbb{A}(0,\omega)$ to be the identity matrix. 

\begin{assumption} { } \end{assumption} 
	Let us assume that there exists an ergodic $\sigma$-invariant probability measure $\mathbb{P}$ on $\Omega$ such that
\begin{itemize}
	\item There exists a graph $\Gamma_0 \in \mathfrak{G}$ such that all entries of $A(\Gamma_0)$ are positive 				and $\mathbb{P}(\{\omega: \omega_0 = \Gamma_0\} )>0$
	\item The matrices $A(\omega_n)$ are $\mathbb{P}-$almost surely invertible
	\item Both $\log (1+||\mathbb{A}||)$ and $\log (1+||\mathbb{A}^{-1}||)$ are integrable with respect to $\mathbb{P}$.
\end{itemize}

The \textit{transpose} cocycle $\mathbb{A}^t$ is associated with the reversed shift $\sigma^{-1}$ and is defined by the formula (for $n>0$)
$$
\begin{matrix}
\mathbb{A}^t(n,\omega) = A^t(\omega_{1-n})...A^t(\omega_0),\\
\mathbb{A}^t(-n,\omega) = {(A^t)}^{-1}(\omega_n)...{(A^t)}^{-1}(\omega_1).	
\end{matrix}
$$
In the same way, $\mathbb{A}^t(0,\omega)$ is set to be the identity matrix.  We suppose that Assumption 1 holds for the cocycle $\mathbb{A}^t$ too.

	The first part of  Assumption 1 implies the construction of invariant probability measure on generic (with respect to $\mathbb{P}$) individual Markov compactum. Indeed, for a full measure set of Markov compacta there exists  $n_0 = n_0(\omega) \in \mathbb{N}$ such that all entries of the matrix $\mathbb{A}(n_0,\omega)$ are positive.  The unique positive $\sigma\text{-additive}$ measure $\Phi_1^+$ on $\mathfrak{C}^+(X(\omega))$ corresponds to the unique eigenvector with positive coordinates. In a similar way, one can define the unique (up to scaling) positive $\sigma\text{-additive}$ measure $\Phi_1^-$ on the semi-ring $\mathfrak{C}^-(X(\omega))$. 

	Thus for $\mathbb{P}$-almost every Markov compactum $X = X(\omega)$ we obtain a $\sigma$-invariant probability measure $\nu = \nu_{\omega}$ which is defined for a cylindrical set $C \subset X$, that is the set of the form $\{x: x_{n+1} = e_1,..., x_{n+k} = e_k\}$, by the formula
\begin{equation}\label{nu}
\nu(C) = \Phi_1^+(\gamma_n^+(x) \cap C) \cdot \Phi_1^-(\gamma_{n+k}^-(x) \cap C). 
\end{equation}

	 The second and the third parts of Assumption 1 imply that Oseledets Multiplicative Ergodic Theorem (see \cite{KatokHasselblatt'95}, \cite{Oseledets'68},\cite{Pesin'77}) is applicable to the cocycle $\mathbb{A}$ (as well as for the transpose cocycle $\mathbb{A}^t$). Namely, for $\mathbb{P}$-almost all Markov compacta $X(\omega)$ there exists an $\mathbb{A}$-invariant direct-sum decomposition	$\mathbb{R}^m = E_\omega^u \oplus E_\omega^{cs},$ where $E_\omega^u$ is the strictly expanding subspace of $\mathbb{A}(\cdot, \omega)$ and $E_\omega^{cs}$ is its central-stable subspace.  The space $E_\omega^u$ corresponds to positive Lyapunov exponents, while the space $E_\omega^{cs}$ corresponds to zero and negative Lyapunov exponents, and the growth of vectors in $E_\omega^{cs}$ is at most sub-exponential.

\subsection{Finitely-additive measures. \\} \text{ }
	Following Bufetov (see \cite{Bufetov'09}, \cite{Bufetov'13}), we introduce the space of finitely-additive measures  $\mathfrak{B}^+(X), \mathfrak{B}^-(X)$ on the semi-rings $\mathfrak{C}^+, \mathfrak{C}^-$ corresponding to the foliations $\EuScript{F}^+, \EuScript{F}^-$ of the Markov compactum $X=X(\omega)$.  A \textit{finitely-additive measure} $\Phi^+ \in \mathfrak{B}^+(X)$ is a real-valued (not obligatory positive!) functional, defined on the sets of the form $\gamma_n^+(x)$ so that 
\begin{itemize}
\item $\Phi^+(\gamma_n^+(x)) = \Phi^+(\gamma_n^+(y))$ if  the terminal vertices $F(x_n), F(y_n)$ of edges $x_n, y_n$ coincide
\item there exist constants $\theta>0$ and $C>0$ such that \\ $\left|\Phi^+(\gamma_{-n}^+(x))\right| \le C e^{-\theta n}$ for any $x \in X$ and sufficiently large $n$. 
\end{itemize}
	We shall call the sets of the form $\gamma_n^+(x)$ \textit{Markovian arcs}.

	By the definintion, the measures $\Phi^+$ are invariant under changing of `future', so they are holonomy-invariant with respect to the foliation $\EuScript{F}^-$.	Due to this holonomy invariance, at each level $n \in \mathbb{Z}$ we can choose `canonical arcs' $\gamma_n^+(x_{i,n}) = \gamma_{i,n}^+, i\in \{1,...,m\}$ and introduce a sequence of $m\text{-dimensional}$ vectors $v_n$   
by setting
\begin{equation}\label{fam}
{(v_n)}_i = \Phi^+(\gamma_n^+(x_{i,n})).
\end{equation}
	The finite-additivity of $\Phi^+$ yields that the sequence $v_n, n \in \mathbb{Z}$ satisfies the equality $v_{n+1} = A_n v_n$.  Thus if Assumption 1 holds, then for $\mathbb{P}$-almost every Markov compactum $X$ there is an isomorphism between the space of finitely-additive measures $\mathfrak{B}^+(X)$ and the strictly expanding subspace $E_\omega^u$ of the cocycle $\mathbb{A}$. 

	In the same way one can define measures $\Phi^-\in\mathfrak{B}^-(X)$ such that
\begin{itemize}
\item $\Phi^-(\gamma_n^-(x)) = \Phi^-(\gamma_n^-(y))$ if  the initial vertices $I(x_n), I(y_n)$ of edges $x_n, y_n$ coincide
\item there exist constants $\theta>0, C>0$ such that\\
 $\left|\Phi^-(\gamma_{n}^-(x))\right| \le C e^{-\theta n}$ for any $x \in X$ and sufficiently large $n$. 
\end{itemize}
	Again, if Assumption 1 holds, there is an isomorphism between the space $\mathfrak{B}^-(X)$ and the strictly expanding subspace $\tilde{E}_\omega^u$ of the transpose cocycle $\mathbb{A}^t$. 
\subsection{ Duality and the functional $\mathbf{m}_{\Phi^-}$. \\} \text{ }
	In conditions of Assumption 1 the spaces $\mathfrak{B}^+(X)$ and $\mathfrak{B}^-(X)$ are dual to each other for $\mathbb{P}$-almost every Markov compactum $X = X(\omega)$. More precisely, for $\mathbb{P}\text{-almost all}$ Markov compacta the inner product in $\mathbb{R}^m$ induces a non-degenerate pairing $\langle \cdot, \cdot \rangle$ between the unstable subspaces of $\mathbb{A}$ and $\mathbb{A}^t$, $E_\omega^u$ and $\tilde{E}_\omega^u$ respectively, by the formula
\begin{equation}\label{dual}
\begin{matrix}
\langle v, \tilde{v} \rangle = \sum\limits_{j=1}^m v_j \tilde{v}_j, & \text{where } v \in E_\omega^u,  \tilde{v} \in \tilde{E}_\omega^u.
\end{matrix}
\end{equation}
The pairing $\langle \cdot, \cdot \rangle$ between $E_\omega^u$ and $\tilde{E}_\omega^{u}$ is identified with the one between $\mathfrak{B}^+(X)$ and $\mathfrak{B}^-(X)$ by the isomorphism described in the previous section.

	The Oseledets Multiplicative Ergodic Theorem implies that any finitely-additive measure $\Phi^+$ admits a decomposition into base functionals $\Phi_1^+,...,\Phi_d^+$ defined by the formula
\begin{equation}\label{base}
	\Phi^+ = \langle \Phi^+, \Phi^-_1 \rangle \Phi_1^+ + ... + \langle \Phi^+, \Phi^-_d \rangle \Phi_d^+,
\end{equation}
where the functionals $\Phi_1^-,..., \Phi_d^-$ form the basis in $\mathfrak{B}^-(X)$, and this basis is dual to $\{\Phi_1^+,...,\Phi_d^+\}$.

	Given the pairing $\langle \cdot, \cdot \rangle$, for arbitrary finitely-additive measure $\Phi^- \in \mathfrak{B}^-(X)$ we shall define a functional $\mathbf{m}_{\Phi^-} = \langle \Phi_1^+ , \Phi^- \rangle$ on $X$. 
This functional is a finitely-additive measure on subsets of $X$ which can be described in terms of formula \eqref{nu} in the following way: for a cylindrical set $\{x: x_{n+1} = e_1,..., x_{n+k} = e_k\} = C \subset X$ we put 
$$
	\mathbf{m}_{\Phi^-}(C) \equiv \Phi_1^+ \times \Phi^- (C) = \Phi_1^+(\gamma_n^+(x) \cap C) \cdot \Phi^-(\gamma_{n+k}^-(x) \cap C).
$$
\subsection{Flow $h_t^+$ and cohomological equation. \\}	\text{ }

	Given a Markov compactum $X$ endowed with Vershik ordering $\mathfrak{o}$, following Ito \cite{Ito'78}, we construct \textit{a vertical} flow $h_t^+, t \in \mathbb{R}$ along the leaves of the foliation $\EuScript{F}^+(X)$ by setting
$$
	\Phi_1^+([x,h_t^+x]) = t,
$$ 
where the interval $[x,h_t^+(x)]$ is considered with respect to Vershik ordering $\mathfrak{o}$. It is not hard to see that the flow $h_t^+$ is correctly defined for $\nu$-almost all points in $X$ and that $h_t^+$ preserves the measure $\nu$. Moreover, the flow $h_t^+$ is uniquely ergodic for $\mathbb{P}$-almost all Markov compacta. In the same way one can define \textit{a horizontal} flow $h_t^-$ along the leaves of the foliation $\EuScript{F}^-(X)$ using a reversed Vershik ordering.

	The first return map of $h_t^+$ to Markov compactum $\hat{X}$ obtained by deleting graphs $\Gamma_n, n\le 0$ is called \textit{Vershik automorphism}. It sends a path in a graph associated with $\hat{X}$ to its successor with respect to Vershik ordering (for one-sided Markov compacta the successor is almost surely correctly defined).  For a more detailed explanation see Bufetov ~\cite{Bufetov'13}.

	\textbf{Important remark.} In order to define the measure $\Phi_1^+$ (as well as other measures from the space $\mathfrak{B}^+(X)$) on arcs of type $[x, h_t^+x]$ properly we shall assume that the number of paths in $X$ grows at most sub-exponentially, or, in other words, the sequence of matrices $A_n$ of $X$ has sub-exponential growth, that is for any $\varepsilon > 0$ there exists some constant $C_\varepsilon > 0$ such that 
$$
	\sum\limits_{i,j=1}^m {(A_n)}_{ij} \le C_\varepsilon \exp (\varepsilon |n|) \text{ as }|n| \to \infty.
$$  	
	\\ 
	\par
	Let us consider the \textit{cohomological equation}
\begin{equation}\label{eqn}
	\left.\frac{d}{dt}\right|_{t=0} u(h_t^+x) = f(x).
\end{equation}
	Our aim is to find sufficient conditions for existence of solution of ~\eqref{eqn}.  

	A function $g$ is called Lipschitz on Markov compactum $X$, if it satisfies \textit{the weak Lipschitz property}, that is for any $n \in \mathbb{Z}$ and some constant $C>0$ depending only on $g$ we have
\begin{equation}\label{wlip}
\begin{matrix}
 \left| \int\limits_{\gamma_n^+(x)} gd\Phi_1^+ - \int\limits_{\gamma_n^+(y)} gd\Phi_1^+ \right| \le C &  \text{if $F(x_n) = F(y_n)$}. 
\end{matrix} 
\end{equation}
	A function $f$ on $X$ is said to belong to the class $C^{1+Lip}$ if it is continuous (in a weak topology of $X$) and the derivative $\varphi(x) = \left.\frac{d}{ds}\right|_{s=0} f(h_s^-x)$ along the horizontal direction is a Lipschitz function on $X$.  

\begin{theorem} Let $\mathbb{P}$ be a probability measure on the space $\Omega$ satisfying Assumption 1. Then for $\mathbb{P}$-almost every Markov compactum $X$ the following holds.   Let $h_t^+, h_s^-$ be the vertical and the horizontal flows on $X$, and let $f$ be a function of class $C^{1+Lip}$ on $X$.  Then there exists a bounded solution $u$ of ~\eqref{eqn} if and only if for any finitely-additive measure $\Phi^- \in \mathfrak{B}^-(X)$  we have $\int\limits_X fd\mathbf{m}_{\Phi^-} = 0$. 
\end{theorem}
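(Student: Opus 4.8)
The plan is to reduce the statement, via the Oseledets theory for the renormalization cocycles $\mathbb{A}$ and $\mathbb{A}^t$, to a twisted cohomological equation over these linear cocycles, and then to absorb the ``central--stable'' part of the resulting obstruction by exploiting the extra regularity of $f$. To $f$ one associates the functional on Markovian arcs $\Phi^+_f(\gamma) = \int_\gamma f\, d\Phi_1^+$, recorded through the vectors $W_n \in \mathbb{R}^m$ with $(W_n)_j = \Phi^+_f(\gamma^+_{j,n})$ for the canonical level-$n$ arcs of terminal vertex $j$. A first, essentially routine step is to show that $W_n$ is intrinsic up to a uniformly bounded error: two level-$n$ arcs with the same terminal vertex are joined by a horizontal holonomy of $\Phi_1^-$-length at most that of one horizontal cell, and the increment of $\int_{\gamma^+_{\cdot,n}} f\, d\Phi_1^+$ along such a holonomy is an integral of the horizontal derivative $\varphi$, to which the weak Lipschitz property \eqref{wlip} together with the sub-exponential growth of the $A_n$ (the Important Remark) apply. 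Partitioning a level-$(n+1)$ arc into its level-$n$ sub-arcs then gives the approximate cocycle identity $W_{n+1} = A_n W_n + \varepsilon_n$, and the same estimate, now applied across a horizontal cell of exponentially small $\Phi_1^-$-measure and summed over sub-exponentially many sub-arcs, yields $\| \varepsilon_n \| \to 0$ exponentially as $n \to +\infty$.

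The next step is to identify the obstruction. Using the defining formula $\mathbf{m}_{\Phi^-} = \Phi_1^+ \times \Phi^-$ together with Fubini at level $n$, one expresses $\int_X f\, d\mathbf{m}_{\Phi^-}$ as a pairing of the type \eqref{dual} between a renormalized version of $W_n$ and the vector representing $\Phi^-$; passing to the limit and using the decay of $\varepsilon_n$, this pairing is seen to vanish for every $\Phi^- \in \mathfrak{B}^-(X)$ precisely when the $E^u_\omega$-component of the solution of the twisted relation stays bounded as $n \to +\infty$ — equivalently, when a certain explicit vector $c^u = c^u(f,\omega) \in E^u_\omega$, built from $W_0$ and the convergent series of the propagated errors $\varepsilon_k$, vanishes. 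This already settles necessity: if $u$ is a bounded solution of \eqref{eqn} then $\int_0^T f(h^+_t x)\, dt = u(h^+_T x) - u(x)$ is uniformly bounded in $T$ and $x$, hence so are the cell integrals $(W_n)_j$ (which equal such ergodic integrals over arcs of $\Phi_1^+$-length $(v_n)_j$, up to the $O(1)$ error above); since $(v_n)_j$ grows exponentially, boundedness of $W_n$ forces the exponentially growing part, i.e. $c^u$, to vanish, and therefore $\int_X f\, d\mathbf{m}_{\Phi^-} = 0$ for all $\Phi^-$.

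For sufficiency one assumes $c^u(f,\omega) = 0$. Realizing $h^+_t$ as a special flow over the Vershik automorphism with piecewise constant (hence bounded) roof function and invoking a Gottschalk--Hedlund type criterion (legitimate by unique ergodicity of $h^+_t$), a bounded solution of \eqref{eqn} exists as soon as the ergodic integrals $\int_0^T f(h^+_t x)\, dt$ are bounded uniformly in $T \geq 0$ and $x \in X$; the solution is then produced as a uniform limit of partial ergodic integrals along an exhausting sequence of Markovian arcs. To bound $\int_0^T f(h^+_t x)\, dt$ one decomposes the arc $[x, h^+_T x]$ greedily into $O(\log T)$ Markovian sub-arcs at varying levels, so that the integral becomes, up to $O(1)$ per block, a sum of cell integrals $(W_n)_j$, with $W_n = \mathbb{A}(n,\omega) W_0 + (\text{a sum of propagated error terms})$; the hypothesis $c^u = 0$ kills the $E^u_\omega$-component, leaving only the $E^{cs}_\omega$-component, which grows at most sub-exponentially in $n$. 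Turning this sub-exponential bound into a genuinely uniform one — and controlling the accumulation of the $O(\log T)$ block errors and of the error tail — is the crux of the argument: here one uses that, for $\mathbb{P}$-almost every $\omega$, the orbit of $\omega$ returns to the set $\{ \omega_0 = \Gamma_0 \}$ on which the incidence matrix is strictly positive, which along a subsequence of levels yields comparability of cell sizes and a Perron--Frobenius contraction of the central--stable direction relative to the Perron direction of $\Phi_1^+$, once more combined with the $C^{1+Lip}$ regularity of $f$. This is the symbolic analogue of the Diophantine estimates of Marmi--Moussa--Yoccoz and of Forni's handling of the sub-exponentially growing part, and I expect it to be the main obstacle; the renormalization bookkeeping of the first two steps and the duality identification of the obstruction are comparatively routine within Bufetov's framework.
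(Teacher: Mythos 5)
Your architecture coincides with the paper's: reduce via Gottschalk--Hedlund to uniform boundedness of the ergodic integrals, derive an approximate cocycle identity for the vectors of Markovian arc integrals of $f$, identify the obstruction with the unstable component through the duality \eqref{dual}, and finally decompose an arbitrary arc into Markovian ones. There is, however, a genuine gap in your first step, and it sits exactly where the $C^{1+Lip}$ hypothesis has to do its work. You assert that the holonomy increment of $\int_{\gamma_n^+} f\,d\Phi_1^+$ decays exponentially because the horizontal displacement has exponentially small $\Phi_1^-$-length and because $\varphi$ satisfies \eqref{wlip} while the $A_n$ grow sub-exponentially. That is not sufficient. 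The increment is controlled by $\Phi_1^-([x,y])\cdot\sup_z\bigl|\int_{\gamma_n^+(z)}\varphi\,d\Phi_1^+\bigr|$: the first factor is $O(e^{-\theta_1 n})$, but the trivial bound on the second is $O(e^{\theta_1 n})$ (arc length times $\sup|\varphi|$), so the product is only $O(1)$. The weak Lipschitz property plus sub-exponential growth of $A_n$ give only the approximate cocycle relation for the $\varphi$-integrals with $O(e^{\varepsilon n})$ errors; to beat $e^{\theta_1 n}$ one must kill the top Lyapunov component of these integrals. The paper does this (Proposition 2) by noting that $\int_X \varphi\,d\nu = 0$ because $\varphi$ is the derivative of $f$ along the horizontal flow, which preserves $\nu=\Phi_1^+\times\Phi_1^-$; this is what yields $\bigl|\int_{\gamma_n^+}\varphi\,d\Phi_1^+\bigr|\le C e^{\theta_2 n}$ and hence the decay rate $\theta=\theta_1-\theta_2$ for your $\varepsilon_n$. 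The point is not cosmetic: with merely bounded $\varepsilon_n$ the accumulated central--stable contribution to $W_n$ can grow like $n$ or worse, and the boundedness you need downstream is lost.

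On the central--stable component, which you single out as the crux and propose to tame with a Perron--Frobenius return-time argument: the paper's route is different and shorter. Its Proposition 3 shows that once $|A_n w_n - w_{n+1}|\le C e^{-\theta n}$ is in hand, one may rescale to $B_n=e^{-\delta}A_n$ with $0<\delta<\theta/2$, so that the central directions become stable for $B$; the propagated errors then sum to $O(e^{-\delta n})$ in the rescaled picture, and undoing the rescaling gives a uniformly bounded deviation $|A_n\cdots A_1\hat w - w_{n+1}|\le C$ with $\hat w$ in the unstable space. Vanishing of all pairings $\langle \Phi_f^+,\Phi^-\rangle=\int_X f\,d\mathbf{m}_{\Phi^-}$ together with non-degeneracy of \eqref{dual} forces $\hat w=0$, whence the Markovian arc integrals are bounded outright; no recurrence to $\{\omega_0=\Gamma_0\}$ is invoked. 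Note that this rescaling trick is powered precisely by the exponential decay of the errors, which is one more reason the gap above must be repaired before the rest of your outline can go through.
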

	
	In other words, there is a finite number of obstacles to the existence of solution of \eqref{eqn}.

	Our result is closely related to ones of Marmi, Moussa and Yoccoz for interval exchange maps ~\cite{MMY'05} and of Forni for translation flows on flat surfaces ~\cite{Forni'97}.  
	Interval exchange maps, as well as translation flows, admit a symbolic representation as  Vershik automorphisms and their suspension flows respectively via Rauzy-Veech induction (see~\cite{Bufetov'13}) applied to Markovian partition into zippered rectangles.  
	The functionals $\mathbf{m}_{\Phi^-}$ then become invariant distributions in a sense of Forni ~\cite{Forni'97} with respect to the vertical flow $h_t^+$.

\section{Proof of Theorem 1}
	The rest of this paper will be devoted to the proof of Theorem 1. Due to Gottschalk-Hedlund theorem (see ~\cite{Johnson'78}, ~\cite{GH'55}), it is enough to prove that the integrals of the function $f$ along the arcs of the flow $h_t^+$ are uniformly bounded, that is the following
\begin{proposition} In conditions of Theorem 1 there exists some constant $C$ that does not depend on $T$ such that for some point $x_0 \in X$
\begin{equation}\label{bound}
	\left| \int\limits_0^T f \circ h_t^+(x_0) dt \right|  \le C	
\end{equation}
\end{proposition}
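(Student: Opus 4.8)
The plan is to estimate the Birkhoff-type integral $\int_0^T f\circ h_t^+(x_0)\,dt$ by decomposing the orbit arc $[x_0, h_T^+x_0]$ into Markovian arcs of controlled size, using the renormalization cocycle $\mathbb{A}$ to pass between scales. First I would fix a generic $\omega$ (so that the Oseledets decomposition $\mathbb{R}^m = E_\omega^u\oplus E_\omega^{cs}$, the sub-exponential growth of the matrices, and the uniform ergodicity of $h_t^+$ all hold), and observe that for each level $n$ the orbit arc decomposes, up to two boundary arcs at levels $n$ and something larger, into a union of at most $\|\mathbb{A}(n,\omega)\|$-many (in fact boundedly many per vertex class) Markovian arcs $\gamma_n^+(x)$ whose $\Phi_1^+$-lengths are controlled by the entries of $\mathbb{A}(n,\omega)$. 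On such a single Markovian arc $\gamma_n^+(x)$ the integral $\int_{\gamma_n^+(x)} f\,d\Phi_1^+$ should be expressed, via the holonomy invariance built into the definition of $\mathfrak{B}^+(X)$ and the transverse regularity coming from $f\in C^{1+Lip}$, in terms of the value of a finitely-additive measure evaluated on $\gamma_n^+(x)$ plus a uniformly small error.

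The key structural step is the following: because $\varphi(x) = \frac{d}{ds}\big|_{s=0} f(h_s^-x)$ is weakly Lipschitz, the function $f$ restricted to a leaf of $\EuScript{F}^+$ varies, under horizontal holonomy, in a way that is affine-plus-bounded along the Markovian arc; integrating this out, the leading term of $\int_{\gamma_n^+(x)} f\,d\Phi_1^+$ is a linear functional of the "coordinate vector" of $\gamma_n^+(x)$, i.e. it is given by pairing with some finitely-additive measure. Concretely I expect to produce, for each scale $n$, a vector in $\mathbb{R}^m$ recording the integrals of $f$ over the canonical arcs $\gamma_{i,n}^+$, show it satisfies the cocycle relation under $\mathbb{A}$ up to a uniformly bounded correction term, and therefore decompose it along the Oseledets splitting. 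The hypothesis $\int_X f\,d\mathbf{m}_{\Phi^-} = 0$ for all $\Phi^-\in\mathfrak{B}^-(X)$ is exactly the statement that the projection of this vector onto the unstable subspace $E_\omega^u$ (equivalently: its pairing with every element of $\tilde E_\omega^u\cong\mathfrak{B}^-(X)$) vanishes, so that the only surviving component lies in the central-stable subspace $E_\omega^{cs}$, where it grows sub-exponentially.

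From there the estimate is a telescoping/renormalization argument: write $\int_0^T f\circ h_t^+(x_0)\,dt$ as a sum over levels $n$ from the top scale (determined by $T$, via $\Phi_1^+$-length $\sim T$) down, at each level peeling off the boundary Markovian arcs not yet accounted for; the contribution of level $n$ is bounded by (number of arcs at that level) $\times$ (max error per arc) plus the $E^{cs}$-component contribution, and both the number of relevant arcs and the $E^{cs}$-growth are sub-exponential in $n$, while the vanishing of the $E^u$-component kills what would otherwise be the exponentially large term. Summing the sub-exponential contributions over the logarithmically-many relevant scales still gives a bound independent of $T$ — here one uses the sub-exponential growth assumption on $A_n$ from the Important remark to guarantee that "sub-exponential times sub-exponential summed over $\log T$ scales" remains bounded, which requires the standard Oseledets-regularity trick of absorbing the $\varepsilon$-exponential factors. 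This last summation, and in particular making the error terms at each scale genuinely summable rather than merely individually small, is the main obstacle: it forces one to choose the reference point $x_0$ carefully (so that its orbit does not spend too long near the "bad" part of $X$) and to track the $C^{1+Lip}$-norm of $f$ through the renormalization precisely enough that the per-scale error is itself exponentially small in an auxiliary parameter, dominating the sub-exponential multiplicity. I would handle this by a Borel–Cantelli argument along the sequence of return times to the positive graph $\Gamma_0$ guaranteed by the first part of Assumption 1, which supplies the uniform hyperbolicity needed to contract the error terms.
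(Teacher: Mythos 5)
Your overall architecture --- record the integrals of $f$ over the canonical Markovian arcs in a vector $w_n\in\mathbb{R}^m$, compare $w_{n+1}$ with $A_nw_n$, use the pairing with $\mathfrak{B}^-(X)$ to kill the unstable component, and then decompose an arbitrary orbit arc into Markovian arcs with sub-exponential multiplicity --- matches the paper. But there is a genuine gap at the central step: you only claim that the cocycle coherence error $A_nw_n-w_{n+1}$ is \emph{uniformly bounded}, and from bounded errors you can conclude at best sub-exponential growth of $w_n$ (the central, zero-exponent part of the Oseledets splitting accumulates roughly $n$ bounded contributions, each propagated sub-exponentially). Sub-exponential growth of the Markovian-arc integrals is not enough: in the final summation the level-$n$ multiplicities are themselves sub-exponential and the number of relevant levels grows like $\log T$, so the total is of order $T^{\varepsilon}$ rather than $O(1)$. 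You recognize this as ``the main obstacle'' and propose to fix it by choosing $x_0$ carefully and running a Borel--Cantelli argument along returns to $\Gamma_0$; that is not the mechanism that works here (the paper's bound is uniform over all Markovian arcs and requires no special choice of $x_0$ beyond genericity of $\omega$), and it is not clear it could be made to work.

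The missing idea is that the coherence error is in fact \emph{exponentially small}, $|A_nw_n-w_{n+1}|\le C e^{-(\theta_1-\theta_2)n}$, and this is where the $C^{1+Lip}$ hypothesis is really spent. The paper's Proposition 2 compares $\int_{\gamma_n^+(x)}f\,d\Phi_1^+$ and $\int_{\gamma_n^+(y)}f\,d\Phi_1^+$ for arcs with the same terminal vertex via the Mean Value Theorem in the horizontal direction: the transverse displacement is controlled by $\Phi_1^-([x,y])\le Ce^{-\theta_1 n}$, while the integral of the horizontal derivative $\varphi$ over a level-$n$ arc grows only like $e^{\theta_2 n}$ --- because $\varphi$ has zero $\nu$-average (both flows preserve $\nu$), so the $\Phi_1^+$-component of the associated finitely-additive measure $\Phi_\varphi^+$ vanishes and the top exponent does not appear. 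The product gives the spectral-gap rate $e^{-(\theta_1-\theta_2)n}$. With exponentially decaying errors, the paper's Proposition 3 (a rescaling trick, $B_n=e^{-\delta}A_n$ with $0<\delta<\theta/2$) produces a single unstable vector $\hat w$ with $|A_n\cdots A_1\hat w-w_{n+1}|\le C$ genuinely \emph{bounded}; the hypothesis $\int_Xf\,d\mathbf{m}_{\Phi^-}=0$ for all $\Phi^-$ then forces $\hat w=0$, so the Markovian-arc integrals are uniformly bounded and the final telescoping sum converges. Your ``affine-plus-bounded'' heuristic gestures at the role of $\varphi$, but without extracting the exponential gain from the gap $\theta_1-\theta_2$ and the zero average of $\varphi$, the argument does not close.
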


	The integral \eqref{bound} can be rewritten as $\int\limits_\gamma f d\Phi_1^+$, where $\gamma$ is an interval $[x_0, h_t^+ x_0]$ of the leaf $\gamma_\infty^+(x_0)$. Thus we have to prove that the integrals $\int\limits_\gamma f d\Phi_1^+$ are uniformly bounded for any arc $\gamma = [x_0, h_t^+ x_0]$.

\subsection{Difference between integrals for long arcs. \\} \text{ }

	Let us start with the proof of Proposition 1 for Markovian arcs, that is for arcs of the form $\gamma_n^+(x)$.

\begin{proposition} Under assumptions of Theorem 1
\begin{equation}\label{diffmark}
\begin{matrix} \left| \int\limits_{\gamma_n^+(x)} f d\Phi_1^+ - 
\int\limits_{\gamma_n^+(y)} f d\Phi_1^+ \right| \le C_\theta e^{-\theta n} & \text{if $F(x_n) = F(y_n)$},  
\end{matrix} \end{equation}
where $\theta = \theta_1 - \theta_2$, that is the difference between the top and the second Lyapunov exponents of the renormalization cocycle $\mathbb{A}$.
\end{proposition}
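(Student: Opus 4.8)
The plan is to reduce the difference of integrals over two Markovian arcs with the same terminal vertex to a telescoping sum over the levels $-n \le k \le n$, controlled by the weak Lipschitz property \eqref{wlip} at each level together with the Oseledets decomposition of the renormalization cocycle $\mathbb{A}$. First I would use the holonomy invariance: since $F(x_n)=F(y_n)$, the arcs $\gamma_n^+(x)$ and $\gamma_n^+(y)$ are matched by the $\EuScript{F}^-$-holonomy, and I would compare the two integrals by writing each of them as a sum of integrals over the sub-arcs $\gamma_{k}^+(\cdot)$, $k$ ranging from $n$ down to $-N$ for large $N$, using that $\Phi_1^+$ of a level-$(-N)$ arc decays like $e^{-\theta_1 N}$ (growth of $\mathbb{A}$ on $E_\omega^u$, or rather its inverse, gives exponential smallness of the tail) so that the contribution of the very deep levels is negligible and it remains to estimate the finitely many terms at intermediate levels $k$.

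The main quantitative input is the weak Lipschitz estimate \eqref{wlip}: at each level $k$, the difference $\left|\int_{\gamma_k^+(x')} f\,d\Phi_1^+ - \int_{\gamma_k^+(y')} f\,d\Phi_1^+\right|$ between two arcs with matching terminal vertex is bounded by a constant $C$ independent of $k$. However, a bound of $C$ at each of the $\sim 2n$ levels only gives $O(n)$, which is far too weak. The key point is that this per-level error must be \emph{re-weighted} by the measure $\Phi_1^+$ of the corresponding arc, and the arcs at level $-k$ have $\Phi_1^+$-measure comparable to the norm of $\mathbb{A}(-k,\omega)$ restricted to the relevant direction; since $\Phi_1^+$ itself corresponds to the top Oseledets direction (Lyapunov exponent $\theta_1$), while the \emph{discrepancy} between two arcs with the same terminal vertex lives in the complementary directions and is therefore governed by the second exponent $\theta_2$, the $k$-th term is of size $e^{-\theta_1 k} \cdot e^{\theta_2 k} \cdot C = C e^{-(\theta_1-\theta_2)k}$. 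Summing the geometric series in $k$ from $n$ upward yields the bound $C_\theta e^{-\theta n}$ with $\theta = \theta_1 - \theta_2$.

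Concretely, the steps I would carry out are: (i) express $\int_{\gamma_n^+(x)} f\,d\Phi_1^+$ as $\int_{\gamma_n^+(x)} \varphi\,d(\text{something}) + (\text{boundary term})$ — more precisely decompose the Markovian arc into its sub-arcs at successive levels and use that along the horizontal direction $f$ has derivative $\varphi$, a Lipschitz function, so that the holonomy transport of $f$ between $\gamma_n^+(x)$ and $\gamma_n^+(y)$ incurs an error controlled by $\varphi$; (ii) apply \eqref{wlip} to $\varphi$ (which is Lipschitz by the $C^{1+Lip}$ hypothesis on $f$) at each level; (iii) bound the $\Phi_1^+$-weight of a level-$k$ sub-arc using the Oseledets growth estimates for $\mathbb{A}$, splitting the relevant vector into its $E_\omega^u$-component and its $E_\omega^{cs}$-component and noting the discrepancy vector is ``purely'' in the non-top part; (iv) sum the resulting geometric series. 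The main obstacle I anticipate is step (iii): making precise the claim that the difference of the two arcs, transported by holonomy, projects away from the top Lyapunov direction so that it grows at rate $\theta_2$ rather than $\theta_1$ — this requires carefully tracking how the vector $v_n$ of \eqref{fam} encoding $\Phi_1^+$ interacts, under the cocycle, with the increment coming from replacing $x$ by $y$, and invoking the uniformity in the Oseledets theorem (e.g. via tempered/Pesin-type estimates) to get constants that do not depend on $n$. The continuity of $f$ in the weak topology is what guarantees the boundary terms from truncating at deep negative levels actually vanish in the limit.
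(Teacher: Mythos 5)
You have correctly identified the target arithmetic --- the bound must arise as a product of a factor $e^{-\theta_1 n}$ with a factor $e^{\theta_2 n}$ --- and your step (i) contains the germ of the right idea: the holonomy transport of $f$ between the two matched arcs is controlled by the horizontal derivative $\varphi$. But there is a genuine gap exactly at the point you yourself flag as the ``main obstacle,'' and the reason you offer for the $\theta_2$ rate is not the correct one. The discrepancy between two arcs with $F(x_n)=F(y_n)$ does not ``live in the complementary Oseledets directions'' by virtue of the matching of terminal vertices: the discrepancy is not a vector on which the cocycle acts, and matching terminal vertices buys you only the level-independent bound \eqref{wlip}, which says nothing about Lyapunov directions. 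The mechanism that actually produces $\theta_2$ is the zero-mean property of $\varphi$: since $\varphi=\left.\frac{d}{ds}\right|_{s=0}f(h_s^-x)$ and the horizontal flow preserves $\nu$, one has $\int_X\varphi\,d\nu=0$. The paper's proof applies the Mean Value Theorem to write the difference of the two integrals of $f$ as $\Phi_1^-([x,y])\cdot\bigl|\int_{\gamma_n^+(z)}\varphi\,d\Phi_1^+\bigr|$ for an intermediate point $z$; the first factor is $O(e^{-\theta_1 n})$, and for the second one uses the weak Lipschitz property of $\varphi$ to construct a finitely-additive measure $\Phi_\varphi^+$ approximating the vectors of ergodic integrals of $\varphi$ (via Lemma 2.3 of \cite{Bufetov'13}), expands $\Phi_\varphi^+$ in the basis by \eqref{base}, and observes that the coefficient of $\Phi_1^+$ equals $\int_X\varphi\,d\nu=0$; this kills the $e^{\theta_1 n}$ term and leaves $O(e^{\theta_2 n})$. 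Without this vanishing the second factor grows like $e^{\theta_1 n}$ and your product gives only $O(1)$, not exponential decay.

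A secondary, structural problem is the telescoping scheme itself. The arc $\gamma_n^+(x)$ refines into a number of level-$k$ sub-arcs that grows like $e^{\theta_1(n-k)}$ as $k$ decreases, so there is no decomposition of the difference with boundedly many weak-Lipschitz errors per level, and ``summing a geometric series in $k$'' with one term of size $Ce^{-(\theta_1-\theta_2)k}$ per level is not available. A decomposition of the type you describe, with sub-exponentially many Markovian pieces per level as in \eqref{decomposition}, is used in the paper only for the separate, later step of passing from Markovian arcs to arbitrary arcs of the flow; for Proposition 2 the entire comparison takes place at the single level $n$ via the Mean Value Theorem.
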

\begin{proof}
	Let us apply a Mean Value Theorem:
$$ \left| \int\limits_0^T f \circ h_t^+(x) dt - \int\limits_0^T f \circ h_t^+(y) dt \right| = \left| (x-y) \cdot \int\limits_0^T  \frac{\partial f}{\partial s}(h_t^+ h_{s_0}^- x) dt \right|= $$
$$ = \left|(x-y) \cdot \int\limits_0^T \varphi \circ h_t^+(z)dt \right| \le \Phi_1^-([x,y])\cdot \left|\int\limits_{\gamma_n^+(z)} \varphi d\Phi_1^+\right|, $$
where $\Phi_1^-$ is the positive measure on $\EuScript{F}^-(X)$.  

	We know that on smaller arcs the measure $\Phi_1^-$ decays exponentially with the top Lyapunov exponent $\theta_1$, that is $\Phi_1^- ([x,y]) \le C_{1} e^{-\theta_1 n}$ for some constant $C_1 > 0$. 
Thus it remains to prove that there exists a constant $C_2 > 0$ such that $\left|\int\limits_{\gamma_n^+(z)} \varphi d\Phi_1^+\right| \le C_2 e^{\theta_2 n}$.

	Let us choose canonical system of arcs $\gamma^+_{i,n} = \gamma_n^+(z_{i,n})$ and a sequence $\{v_n,  n \in \mathbb{Z} \}$ of vectors defined by the formula
\begin{equation}\label{canon}
\begin{matrix}
	{(v_n)}_i = \int\limits_ {\gamma_{i,n}^+}  \varphi d\Phi_1^+,& i\in\{1,...,m\}.
\end{matrix}
\end{equation}
	Since $\varphi$ is a Lipschitz function on $X$, and due to the sub-exponential growth of $A_n$ the following inequality holds:
$$
	|A_n v_n - v_{n+1}| \le C_\varepsilon e^{\varepsilon n}  
$$
for any $n\in\mathbb{N}$, $\varepsilon > 0$ and some $C_\varepsilon > 0$ not depending on $n$.  
	Indeed, $\varphi$ is Lipschitz, hence 
$$\left| \int\limits_{\gamma_n^+(z)} \varphi d\Phi_1^+ - 
\int\limits_{\gamma_n^+(\tilde{z})} \varphi d\Phi_1^+ \right| \le C  \text{ if $F(z_n) = F(\tilde{z}_n)$}. $$
Splitting the arc $\gamma_{i,n+1}$ into arcs of the form $\gamma_n^+(z)$ and replacing these arcs with the ones of the form $\gamma_{j,n}^+$ we get at most sub-exponential error due to Remark in Section 1.5.

	Thus, from Lemma 2.3 in  Bufetov ~\cite{Bufetov'13}, it follows that there exists a unique vector $\hat{v}_\varphi \in E_1^u$ such that for any $n \in \mathbb{N}$
$$ 
	|A_n...A_1 \hat{v}_\varphi - v_{n+1}| \le C_\varepsilon e^{\varepsilon n},
$$
	 and this yields that there exists a unique finitely-additive measure $\Phi_\varphi^+ \in \mathfrak{B}^+(X),$  corresponding to the vector $\hat{v}_\varphi$ such that for any $n \in \mathbb{N}$
\begin{equation}\label{Lip-approx}
	 \left| \int\limits_{\gamma_{i,n}^+} \varphi d\Phi_1^+ - \Phi_\varphi^+(\gamma_{i,n}^+) \right| \le C_\varepsilon e^{\varepsilon n},
\end{equation}
	where $C_\varepsilon > 0$ depends only on function $\varphi$.

	Moreover, for any functional $\Phi^- \in \mathfrak{B}^-(X)$ a pairing $\langle \Phi_\varphi^+, \Phi^- \rangle$ can be re-written as a Riemann-Stiltjes integral:
\begin{equation}
\langle \Phi_\varphi^+, \Phi^- \rangle = \int_X \varphi d\mathbf{m}_{\Phi^-}.
\end{equation}

	Both flows $h_t^+$ and $h_t^-$ preserve the measure $\nu = \Phi_1^+ \times \Phi_1^-$ on $X$. It yields $\int_X \varphi d\nu  = 0$.
Furthermore, applying formula \eqref{base} to $\Phi^+_\varphi$ we get a decomposition
$$
	\Phi_\varphi^+ = \langle \Phi_\varphi^+, \Phi_1^- \rangle \Phi_1^+ + ... + \langle \Phi^+_\varphi, \Phi_d^- \rangle \Phi_d^+.
$$
	The first term in this sum is equal to $\int_X \varphi d\nu \cdot \Phi_1^+$, so it equals to zero.  Thus there exists $C_2 >0$ that does not depend on $n\in\mathbb{N}$ such that
$$  \begin{matrix} \left|\int\limits_ {\gamma_{i,n}^+}  \varphi d\Phi_1^+\right| \le C_2 e^{\theta_2 n} &  \text{ for any $n\in \mathbb{N},$} \end{matrix}$$
 and this finishes the proof of Proposition 2.
\end{proof}

\subsection{ Proof of Proposition 1 for Markovian arcs. \\} \text{ }
 	
	Now we come to the new sequence of vectors associated with the integrals of $f$ along canonical arcs
\begin{equation}\label{canon-f}
\begin{matrix}
	{(w_n)}_i = \int\limits_ {\gamma_{i,n}^+}  f d\Phi_1^+,& i\in\{1,...,m\}.
\end{matrix}
\end{equation}

	Proposition 2 yields that for any $n \in \mathbb{N}$ and $\theta = \theta_1 - \theta_2$
$$
	\left| A_n w_n - w_{n+1} \right| \le C_{\theta} \exp(-\theta n) 
$$

\begin{proposition}
	In assumptions of Theorem 1, let $w_1,...,w_n, ...$ be a sequence of vectors such that there exist a constant $C_\theta > 0$ and $\theta > 0$ such that for all $n \ge 1$ we have
	$$ |A_n w_{n} - w_{n+1}| \le C_\theta \exp(-\theta n).$$
	Then there exists a unique vector $\hat{w} \in E_u^1$ such that	$$|A_n...A_1\hat{w} - w_{n+1}| \le C. $$ 
\end{proposition}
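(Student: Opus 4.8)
The plan is to regard $\{w_n\}$ as a trajectory of the inhomogeneous recursion $w_{n+1}=A_nw_n+\delta_n$ with defect $\delta_n:=w_{n+1}-A_nw_n$, $|\delta_n|\le C_\theta e^{-\theta n}$, and to produce $\hat w$ by pulling this trajectory back along the expanding directions of the renormalization cocycle. Writing $\mathbb{A}(n):=\mathbb{A}(n,\omega)=A_n\cdots A_1$, iterating the recursion gives
\[
w_{n+1}=\mathbb{A}(n)w_1+\sum_{k=1}^{n}\mathbb{A}(n)\mathbb{A}(k)^{-1}\delta_k,
\]
where $\mathbb{A}(n)\mathbb{A}(k)^{-1}=A_n\cdots A_{k+1}$ is the forward cocycle over the block $[k,n]$. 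I will use the Oseledets data of Assumption~1: the splitting $\mathbb{R}^m=E^u\oplus E^{cs}$, the associated level-dependent, cocycle-equivariant projection $P^u$ onto the expanding part, the fact that $\mathbb{A}(\cdot)$ expands $E^u$ at rate at least $\theta_1-\varepsilon$ (equivalently $\mathbb{A}(n)^{-1}$ contracts $E^u$ at that rate), that $\mathbb{A}(\cdot)$ grows at most subexponentially on $E^{cs}$, and that the Oseledets projection norms grow at most subexponentially.

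For existence I would define
\[
\hat w:=\lim_{n\to\infty}\mathbb{A}(n)^{-1}P^u w_{n+1}.
\]
Using equivariance of $P^u$ together with $w_{n+1}=A_nw_n+\delta_n$, the increments telescope cleanly, $\mathbb{A}(n)^{-1}P^uw_{n+1}-\mathbb{A}(n-1)^{-1}P^uw_n=\mathbb{A}(n)^{-1}P^u\delta_n$ (each projection applied at its own level). The vector $P^u\delta_n$ lies in the expanding subspace at level $n$ and has norm at most $C_\varepsilon e^{\varepsilon n}|\delta_n|\le C_\varepsilon e^{(\varepsilon-\theta)n}$, while $\mathbb{A}(n)^{-1}$ contracts that subspace at rate $\theta_1-\varepsilon$; hence $\sum_n\mathbb{A}(n)^{-1}P^u\delta_n$ converges absolutely, the limit $\hat w$ exists, and being a limit of vectors in the closed subspace $E^u=E_u^1$ it lies in $E_u^1$.

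For the bound I would split
\[
\mathbb{A}(n)\hat w-w_{n+1}=\bigl(\mathbb{A}(n)\hat w-P^uw_{n+1}\bigr)-\bigl(I-P^u\bigr)w_{n+1}.
\]
The first term equals $\sum_{k>n}\mathbb{A}(n)\mathbb{A}(k)^{-1}P^u\delta_k$, where $\mathbb{A}(n)\mathbb{A}(k)^{-1}$ contracts the expanding subspace over the $k-n$ intervening steps; against the exponentially small defects this yields a contribution that is uniformly bounded in $n$ (in fact it tends to $0$). The second term is the central-stable part $\mathbb{A}(n)(I-P^u)w_1+\sum_{k=1}^{n}\mathbb{A}(n)\mathbb{A}(k)^{-1}(I-P^u)\delta_k$ of $w_{n+1}$; here the cocycle does not expand, so the exponential decay $|\delta_k|\le C_\theta e^{-\theta k}$ --- precisely the strength delivered by Proposition~2 --- is what one plays against the at most subexponential growth coming from the cocycle on $E^{cs}$ and from the Oseledets projections, forcing the sum to stay bounded uniformly in $n$. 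Combining the two estimates gives $|\mathbb{A}(n)\hat w-w_{n+1}|\le C$.

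Uniqueness is immediate: if $\hat w,\hat w'$ both satisfy the conclusion then $\mathbb{A}(n)(\hat w-\hat w')$ is bounded in $n$ while $\hat w-\hat w'\in E^u$, and since $\mathbb{A}(\cdot)$ expands $E^u$ exponentially this forces $\hat w=\hat w'$. In this way the proposition sharpens Lemma~2.3 of Bufetov~\cite{Bufetov'13}: the genuinely exponential (not merely subexponential) decay of the defect upgrades the estimate from a subexponentially small error to a uniformly bounded one. I expect the main obstacle to be exactly this upgrade --- showing that the central-stable residue $(I-P^u)w_{n+1}$ stays bounded uniformly in $n$, rather than only up to a subexponential factor, so that the sub-exponential ambiguities inherent in the Oseledets theorem are genuinely absorbed by the fixed rate $\theta=\theta_1-\theta_2$.
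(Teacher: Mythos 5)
Your overall architecture --- split $w_{n+1}$ along the Oseledets decomposition, define $\hat w$ as an absolutely convergent series of pulled-back unstable components of the defects $\delta_k$, and treat the unstable and central-stable contributions separately --- is the same as the paper's, and your handling of the unstable part, of the convergence of the series, and of uniqueness is correct and essentially identical to the paper's. The gap is exactly where you yourself suspect it: the claim that the central-stable residue $(I-P^u)w_{n+1}=\mathbb{A}(n)(I-P^u)w_1+\sum_{k=1}^{n}\mathbb{A}(n)\mathbb{A}(k)^{-1}(I-P^u)\delta_k$ is bounded uniformly in $n$ does not follow from the ingredients you list. The multiplicative ergodic theorem controls $\|\mathbb{A}(n)\mathbb{A}(k)^{-1}|_{E^{cs}}\|$ only subexponentially \emph{in $n$}, not in $k$; for small $k$ the factor $e^{-\theta k}$ is just a constant and absorbs nothing. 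Already the single term $\mathbb{A}(n)(I-P^u)w_1$ --- a fixed central-stable vector pushed forward $n$ times --- is only subexponentially controlled, so your estimate delivers $|(I-P^u)w_{n+1}|\le C_\varepsilon e^{\varepsilon n}$ for every $\varepsilon>0$ (which is what Bufetov's Lemma 2.3 already gives for merely subexponential defects), not the uniform bound asserted. At this point you have restated the conclusion rather than proved it.

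The paper closes this step with a rescaling device that your proposal is missing. Fix $0<\delta<\theta/2$, set $B_n=e^{-\delta}A_n$ and $\tilde w_n=e^{-\delta(n-1)}w_n$. The rescaled defects satisfy $|B_n\tilde w_n-\tilde w_{n+1}|\le Ce^{-(\theta+\delta)n}$, and --- this is the point of the manoeuvre --- the central-stable subspace of $A$ becomes a genuinely stable subspace of $B$, contracted at rate $e^{-\delta}$ per step. The stable part of $\tilde w_{n+1}$ is then a sum of terms of size $e^{-\delta(n-k)}\cdot e^{-(\theta+\delta)k}$, which sums to $O(e^{-\delta n})$; undoing the rescaling multiplies by $e^{\delta n}$ and yields the uniform bound on $(I-P^u)w_{n+1}$. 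The expanding side costs nothing, since $(B_n\cdots B_1)^{-1}\tilde u_{n+1}^{+}=(A_n\cdots A_1)^{-1}u_{n+1}^{+}$, so the vector $\hat w$ is the same series you wrote down. To complete your argument you should either import this rescaling, or prove directly that the cocycle restricted to $E^{cs}$ over a block $[k,n]$ is $O(e^{\delta(n-k)})$ for every $\delta>0$ with a constant independent of $k$; the bare Oseledets statement does not hand you that.
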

\begin{proof}
	Let $E_n^{cu}$ and $E_n^{s}$ be the central-unstable and stable subspaces of the cocycle $\mathbb{A}(n, X)$ respectively (their existence is provided by Oseledets Multiplicative Ergodic Theorem). For any $n \in \mathbb{N}$ a vector $w_{n+1} \in \mathbb{R}^m$ admits a direct-sum decomposition
$$
	w_{n+1}^+ = u_{n+1}^+ + A_n u_{n}^+ + ... + A_n...A_1u_{1}^+,
$$
$$
	w_{n+1}^- = u_{n+1}^- + A_n u_{n}^- + ... + A_n...A_1u_{1}^-,
$$
	where $w_{n+1}^+ \in E_{n+1}^{cu}$, $w_{n+1}^- \in E_{n+1}^{s}$ and $w_{n+1} = w_{n+1}^+  + w_{n+1}^-$.

 
	Let us choose $\delta$ so that $\frac{\theta}{2} > \delta > 0$ and set $B_n =  e^{-\delta}A_n$, and let $\tilde{w}_n = e^{-\delta (n-1)} w_n$.  Again, for a vector $\tilde{u}_{n+1} = B_n \tilde{w}_n - \tilde{w}_{n+1}$ we have
$$
	|\tilde{u}_{n+1}| = |A_n w_n - w_{n+1}| \cdot e^{-\delta  n} \le C e^{-\tilde{\theta} n} 
$$ 
for some $C>0$, where $\tilde{\theta} = \theta + \delta > 0$. 
	
	Again we split the vectors $\tilde{u}_{n+1}$ into central-unstable and stable components $\tilde{u}_{n+1}^+$ and $\tilde{u}_{n+1}^-$ respectively, and
$$
	\tilde{w}_{n+1}^+ = \tilde{u}_{n+1}^+ + B_n \tilde{u}_{n}^+ + ... + B_n...B_1\tilde{u}_{1}^+,
$$
$$
	\tilde{w}_{n+1}^- = \tilde{u}_{n+1}^- + B_n \tilde{u}_{n}^- + ... + B_n...B_1\tilde{u}_{1}^-.
$$
	Note that for any $n \in \mathbb{N}$ we have $\tilde{u}_{n}^+ \in E_n^u$, hence $\tilde{w}_n^+ \in E_n^{u}$.
	Let us introduce a vector $\hat{w} \in E_1^u$:	
$$\hat{w}  = \tilde{u}_1^+ + B_1^{-1} \tilde{u}_2^+ + ... + (B_n...B_1)^{-1}\tilde{u}_{n+1}^+ +....$$
	It is not hard to check that we have the following inequalities: 
$$
	|B_n...B_1\hat{w} - \tilde{w}^+_{n+1}| \le C' \exp(-\tilde{\theta}n),
$$
$$ |\tilde{w}_{n+1}^-| < C'' \exp(-\delta n),$$
	and hence
$$
	|B_1...B_n \hat{w} - \tilde{w}_{n+1}| \le \tilde{C} e^{-\delta n}.
$$
Thus we have arrived at the required estimate:
$$
	|A_1...A_n \hat{w} - w_{n+1}| \le C.
$$ 
	  The uniqueness of the vector $\hat{w}$ is obvious from the fact that it belongs to unstable subspace of $\mathbb{A}$.
\end{proof}

	As a result, 
\begin{proposition} In assumptions of Theorem 1, there exists a unique measure $\Phi_f^+ \in \mathfrak{B}^+$ such that for any $n\in \mathbb{N}$
\begin{equation}
\left| \int\limits_{\gamma_n^+(z)} fd\nu^+ - \Phi_f^+ (\gamma_n^+(z)) \right| \le C (||\varphi||_{Lip^+_w} + ||f||_{C}),  \text{ where } C >0
\end{equation}
\end{proposition}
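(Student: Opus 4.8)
The plan is to deduce this from Proposition 2, Proposition 3 and the isomorphism $\mathfrak{B}^+(X)\cong E_\omega^u$ of Section 1.3, and then to pass from canonical to arbitrary Markovian arcs. First I would take the sequence of vectors $w_n$ from \eqref{canon-f}, with ${(w_n)}_i=\int\limits_{\gamma_{i,n}^+}f\,d\Phi_1^+$. By Proposition 2 this sequence is almost $\mathbb{A}$-equivariant: $|A_nw_n-w_{n+1}|\le C_\theta e^{-\theta n}$ with $\theta=\theta_1-\theta_2>0$. Tracking the constant through the Mean Value estimate in the proof of Proposition 2, one sees that $C_\theta$ is bounded by a constant multiple of $||\varphi||_{Lip^+_w}$. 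Proposition 3, applied to $\{w_n\}$, then produces a unique vector $\hat w_f\in E_\omega^u$ with $|A_n\cdots A_1\hat w_f-w_{n+1}|\le C$; inspecting its proof, the telescoping decomposition used there only involves the increments $A_nw_n-w_{n+1}$ and the single initial vector $w_1$, so $C$ may be taken bounded by a constant multiple of $||\varphi||_{Lip^+_w}+|w_1|$, and since $|w_1|$ is itself bounded by a constant multiple of $||f||_{C}$ (because ${(w_1)}_i=\int\limits_{\gamma_{i,1}^+}f\,d\Phi_1^+$ and $\Phi_1^+(\gamma_{i,1}^+)$ is a fixed finite number) this gives the dependence asserted in the statement.

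Next I would let $\Phi_f^+\in\mathfrak{B}^+(X)$ be the finitely-additive measure corresponding to $\hat w_f$ under the isomorphism of Section 1.3: on canonical arcs its values are the coordinates of $\mathbb{A}(n,\omega)\hat w_f$ (via \eqref{fam}, with the index convention chosen so that these coordinates are compared with $w_{n+1}$, as in Proposition 3), on a general Markovian arc $\gamma_n^+(z)$ it is defined by holonomy-invariance $\Phi_f^+(\gamma_n^+(z))=\Phi_f^+(\gamma_n^+(y))$ whenever $F(z_n)=F(y_n)$, and on negative levels by the cocycle relation $v_{n+1}=A_nv_n$. That $\Phi_f^+$ is genuinely an element of $\mathfrak{B}^+(X)$ should be checked against the two defining conditions of Section 1.3: holonomy-invariance holds by construction, finite additivity is exactly the relation $v_{n+1}=A_nv_n$, and the exponential decay $|\Phi_f^+(\gamma_{-n}^+(x))|\le Ce^{-\theta n}$ is automatic because $\hat w_f$ lies in the strictly expanding subspace $E_\omega^u$, whose backward iterates contract exponentially. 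With this normalisation Proposition 3 reads precisely
$$
\left|\int\limits_{\gamma_{i,n}^+}f\,d\Phi_1^+-\Phi_f^+(\gamma_{i,n}^+)\right|\le C,\qquad i\in\{1,\dots,m\},\ n\in\mathbb{N}.
$$

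It remains to pass to an arbitrary Markovian arc. Given $z\in X$ and $n\in\mathbb{N}$, choose the index $i$ with $F(z_{i,n})=F(z_n)$, so that $\Phi_f^+(\gamma_n^+(z))=\Phi_f^+(\gamma_{i,n}^+)$, and estimate
$$
\left|\int\limits_{\gamma_n^+(z)}f\,d\Phi_1^+-\Phi_f^+(\gamma_n^+(z))\right|\le\left|\int\limits_{\gamma_n^+(z)}f\,d\Phi_1^+-\int\limits_{\gamma_{i,n}^+}f\,d\Phi_1^+\right|+\left|\int\limits_{\gamma_{i,n}^+}f\,d\Phi_1^+-\Phi_f^+(\gamma_{i,n}^+)\right|;
$$
the first summand is $\le C_\theta e^{-\theta n}$ by Proposition 2 and the second is $\le C$ by the displayed inequality, so the left-hand side is bounded, uniformly in $n$ and $z$, by a constant of the required form. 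Uniqueness of $\Phi_f^+$ follows because the difference of two finitely-additive measures with this approximation property would be a finitely-additive measure whose values on all arcs $\gamma_n^+(z)$ are uniformly bounded; but under $\mathfrak{B}^+(X)\cong E_\omega^u$ a nonzero such measure corresponds to a nonzero vector $v\in E_\omega^u$, and $|\mathbb{A}(n,\omega)v|\to\infty$, whence $\sup_i|\Phi^+(\gamma_{i,n}^+)|\to\infty$, a contradiction.

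The main obstacle here is not a new idea but the bookkeeping of constants: one must check that the constant delivered by Proposition 3 for the $f$-sequence $\{w_n\}$ depends on $f$ only through $||\varphi||_{Lip^+_w}+||f||_{C}$, and in particular not through the individual integrals $\int\limits_{\gamma_{i,n}^+}f\,d\Phi_1^+$, which may themselves grow exponentially in $n$. As indicated above, this is forced by the structure of the proof of Proposition 3, in which only the increments $A_nw_n-w_{n+1}$ (governed by Proposition 2, hence by $||\varphi||_{Lip^+_w}$) and the single initial vector $w_1$ (governed by $||f||_{C}$) ever enter.
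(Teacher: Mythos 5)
Your proposal is correct and follows essentially the same route as the paper, which derives this proposition directly from Proposition 2 (applied to the sequence $w_n$ of \eqref{canon-f}) and Proposition 3, via the isomorphism $\mathfrak{B}^+(X)\cong E_\omega^u$. You supply details the paper leaves implicit — the constant-tracking through $\|\varphi\|_{Lip^+_w}$ and $|w_1|$, the holonomy step from canonical to arbitrary Markovian arcs, and the uniqueness argument via unboundedness of forward iterates on $E_\omega^u$ — all of which are consistent with the paper's intent.
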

	Moreover, assumptions of Theorem 1 yield that $\Phi_f^+(\cdot)$ is equal to 0, since for each functional $\Phi^- \in \mathfrak{B}^-(X)$ we have $ \langle \Phi_f^+, \Phi^- \rangle = 0$. This concludes the proof of Propositon 1 for  Markovian arcs, that is for the sets of type $\gamma_n^+(\cdot), n \in \mathbb{Z}$.

\subsection{The integrals along arbitrary arcs are bounded. \\} \text{ }

	It is not too hard to see that the Proposition 1 is true for arbitrary arcs of the flow $h_t^+$. Indeed, let us take an arc $\gamma = [x_0,h_t^+x_0]$ of the vertical flow and approximate it with Markovian arcs.  The arc $\gamma$ admits a decomposition into markovian arcs
	\begin{equation}\label{decomposition}
	\gamma = [x, h_T^+x] = \bigsqcup_{n=-\infty}^{0} \bigsqcup_{k=1}^{N_n} \gamma_{n,k},  
	\end{equation}
where $ \gamma_{n,k}$ is arc of $n$-th level, and $N_n$ grows sub-exponentially as $n$ goes to $-\infty$. 
	Due to sub-exponential growth of $N_n$, we have
	\begin{equation}\label{geom-series}
	\left|\int\limits_0^T f \circ h_t^+(x) dt \right| \le \sum_{n \le 0} \exp(\varepsilon|n|)\cdot \left|\int\limits_{\gamma_{n,k}} fd\nu^+\right|. 
	\end{equation}
	And finally, due to the sub-exponential growth of $A_n$ for negative $n$, terms in the sum ~\eqref{geom-series} decay exponentially fast with the Lyapunov exponent $\theta_1$, and as a result, time integrals are uniformly bounded:
$$ \left|\int\limits_0^T f \circ h_t^+(x) dt \right| \le C_{f}. $$
Now Proposition 1 and then Theorem 1 are completely proved. 

\section*{Acknowledgements}
It is a great pleasure to thank Alexander I. Bufetov for stating the problem and encouraging me to write this paper. I am deeply grateful to Alexey V. Klimenko for many fruitful discussions. In particular, an idea of the proof of Proposition 3 came from discussions with him.

\end{document}